\newtheorem{theorem}{Theorem}[section]
\newtheorem{definition}[theorem]{Definition}
\newtheorem{remark}[theorem]{Remark}
\newtheorem{lemma}[theorem]{Lemma}
\newtheorem{proposition}[theorem]{Proposition}
\newenvironment{proof}[1][Proof]{\textbf{#1.} }{\ \rule{0.5em}{0.5em}}
\newcommand{\im}{\mathrm{im}}
\newcommand{\fqn}{\mathbb{F}_{q^{2n}}}
\newcommand{\fqt}{\mathbb{F}_{q^2}}
\newcommand{\aut}{\mathrm{Aut}}
\begin{document}

\title{A complete characterization of Galois subfields of the generalized Giulietti--Korchm\'aros function field}
\author{Nurdag\"{u}l Anbar, Alp Bassa and Peter Beelen}
\date{}
\maketitle

\begin{abstract}
We give a complete characterization of all Galois subfields of the generalized Giulietti--Korchm\'aros function fields $\mathcal C_n / \fqn$ for $n\ge 5$. Calculating the genera of the corresponding fixed fields, we find new additions to the list of known genera of maximal function fields.
\end{abstract}

\noindent
AMS: 11G20, 14H25, 14H37, 14G05

\vspace{1ex}
\noindent
Keywords: Giulietti--Korchm\'aros function field, Hasse--Weil bound, maximal function fields, quotient curves, Galois subfields, genus spectrum.

\section{Introduction}
Let $F$ be a function field of genus $g(F)$ over the finite field $\mathbb F_{\ell}$ with ${\ell}$ elements. The Hasse--Weil theorem gives the following upper bound for the number of rational places $N(F)$ of $F$:
$$N(F)\leq \ell+1+2g(F)\sqrt{\ell} \ .$$
Function fields attaining this bound are called maximal, and have played a central role in the theory of function fields over finite fields (or equivalently curves over finite fields).

An  important example of maximal function fields is the Hermitian function field $\mathcal H$ over the finite field $\mathbb F_{q^2}$. It is given by $\mathcal H=\mathbb F_{q^2}(x,y)$ with
$$x^q+x=y^{q+1} \ .$$
It has genus $q(q-1)/2$ (in fact the largest possible genus for a maximal function field over $\mathbb F_{q^2}$) and a large automorphism group $A \cong {\mathrm PGU}(3,q)$. By a theorem of Serre (see \cite{Lach}) a subfield of a maximal function field has to be maximal. Studying subfields of the Hermitian function field leads to many new examples of maximal function fields. One way to construct such subfields is by taking fixed fields of subgroups of $A$ (see among others \cite{AQ,bassa,GSX} and the references in \cite{HKT}). Since all maximal subgroups of $A$ are known, interest has been diverted into studying subgroups of the various maximal subgroups. The maximal subgroup $A(P_{\infty})$ fixing the unique pole $P_{\infty}$ of $x$, together with an involution generates the whole automorphism group $A$. A complete characterization of all subgroups of $A(P_\infty)$ and the genera of the corresponding fixed fields have been given in \cite{bassa}.

For a long time, all known maximal function fields were subfields of the Hermitian function field. This led to the question whether any maximal function fields could be embedded as subfields in the Hermitian function field.
Giulietti and Korchm\'aros \cite{GK} introduced a new family of maximal function fields (GK function fields) over finite fields $\mathbb F_{q^6}$, which are not subfields of the Hermitian function field over the corresponding field for $q>2$. The GK function field is given by $\mathcal C=\mathbb F_{q^6}(x,y,z)$ with
$$x^q+x=y^{q+1}\ \makebox{ and } \ z^{(q^3+1)/(q+1)}=y \sum_{i=0}^q(-1)^{i+1}x^{i(q-1)} \ .$$
The GK function field was generalized in \cite{GGS} to a family of maximal function fields over finite fields $\mathbb F_{q^{2n}}$ with $n$ odd. The generalized GK (GGK) function field, also known as the Garcia--G\"{u}neri--Stichtenoth function field, is given by $\mathcal C_n=\mathbb F_{q^{2n}}(x,y,z)$ with
$$x^q+x=y^{q+1}\ \makebox{ and } \ z^{(q^n+1)/(q+1)}=y^{q^2}-y \ .$$
One recovers the Hermitian function field $\mathcal H$ for $n=1$, and the GK function field $\mathcal C$  for $n=3$, since
$$\sum_{i=0}^q(-1)^{i+1}x^{i(q-1)}=-1+x^{q-1}\sum_{j=0}^{q-1}(-1)^jx^{j(q-1)}=-1+x^{q-1}(x^{q-1}+1)^{q-1}=-1+\left(y^{q+1}\right)^{q-1}.$$
 Note that the GGK function field contains a constant field extension of the Hermitian function field $\mathcal H$ over $\mathbb F_{q^2}$ as a subfield. The GGK function fields are not Galois subfields of the Hermitian function fields (\cite{DM,GMZ}).

The automorphism groups of the GGK function fields were determined in \cite{auto1, auto2}. In particular, it was shown that for $n>3$ any automorphism of $C_n$ restricts to an automorphism of $\mathcal H$ fixing $P_\infty$.  We use this together with the characterization in \cite{bassa} to characterize all subgroups of the automorphism group of $\mathcal C_n$ for $n >3$. For $n=3$ the automorphisms of $\mathcal C_n$ do not restrict necessarily to an automorphism of $\mathcal H$ fixing $P_\infty$, but we obtain a characterization of a large class of subgroups.
Adapting an approach from \cite{GSX} in a similar way as in \cite{DO}, we obtain an explicit expression for the genus of the fixed field of these characterized subgroups. This leads to new additions to the list of known genera of maximal function fields.

\section{Results about the Hermitian function field}

We denote by $\mathcal H$ the Hermitian function field over $\fqt$. It can be given as $\mathcal H=\fqt(x,y)$ with $x^q+x=y^{q+1}$. The functions $x$ and $y$ have exactly one pole at a place, which we denote by $P_{\infty}$. As is well known $\mathcal H$ has a large automorphism group $A$ isomorphic to $\mathrm{PGU}(3,q)$. We denote by $A(P_\infty)$ the stabilizer of $P_\infty$. For some $a \in \fqt^*, b,c \in \fqt$, an automorphism $\sigma \in A(P_\infty)$ can be described by the equations
\begin{equation*}
  \sigma(x)=a^{q+1}x+ab^qy+c \quad \text{and}\quad \sigma(y)=ay+b \quad \text{with } c^q+c=b^{q+1} \ .
\end{equation*}
Instead of $\sigma$, we write $[a,b,c]$ for this automorphism. Then $A(P_\infty)$ is given by
\begin{equation}\label{eq:autH}
A(P_\infty)=\{[a,b,c] \, | \, a \in \fqt^*, b,c \in \fqt \ \makebox{ where } \ c^q+c=b^{q+1}\} \ .
\end{equation}
The group law is given by
\begin{equation*}
[a',b',c']\circ[a,b,c]=[a'a,ab'+b,a^{q+1}c'+ab^{q}b'+c]
\end{equation*}

Following \cite{bassa}, for a given subgroup $H \leq A(P_\infty)$, we define the map $\phi: H \rightarrow \fqt^*$ such that $[a,b,c] \mapsto a$.
We denote by $U_H$ the kernel $\ker \phi$ and by $H_1$ the image $\im \phi$ of $\phi$. Then $U_H=\{[1,b,c] \in H\}$ is the unique $p$-Sylow subgroup of $H$. Further, define $\psi: U_H \rightarrow \fqt$ such that $[1,b,c] \mapsto b$. Let $H_2=\im \psi$ and $H_3=\ker \psi.$ For convenience, we write $h_1:=\#H_1$ and $h_w:=\#U_H$. Note that we have $\#H=h_1h_w$, and $h_w=\#H_2 \# H_3$.

In \cite{bassa} it was described precisely which triples $(H_1,H_2,H_3)$ up to conjugation can arise as $H$ varies over all subgroups of $A(P_\infty)$. It will be convenient for a subset $S \subset \fqt$ and integer $i \ge 0$ to denote by $\mathbb{F}_p(S^i)$ the field obtained from $\mathbb{F}_p$ by adjoining all $i$th powers of elements from $S$. Then, we have the following theorem.
\begin{theorem}[Theorem 3.6 \cite{bassa}]\label{thm:classifyH}
For each subgroup $H \leq A(P_\infty)$ up to conjugation we have the following:
\begin{enumerate}
\item[(i)] $H_1$ is a cyclic subgroup of $\fqt^*$.
\item[(ii)] $H_2 \subset \fqt$ is a vector space over $\mathbb{F}_p(H_1)$.
\item[(iii)] $H_3 \subset \{c \in \fqt \mid c^q+c=0\}$ is a vector space over $\mathbb{F}_p(H_1^{q+1})$ containing $W$, where $W=\{b_1b_2^q-b_2b_1^q \, | \, b_1,b_2 \in H_2\}$ if $p$ is odd and $W=\{b^{q+1} \, | \, b \in H_2\}$ if $p=2$.
\end{enumerate}
Conversely, for any $H_1$, $H_2$, $H_3$ satisfying (i), (ii) and (iii) there exists a subgroup $H \leq A(P_\infty)$ giving rise to this triple.
\end{theorem}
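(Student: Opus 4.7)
The proof naturally splits into necessity (every subgroup $H \leq A(P_\infty)$ yields a triple satisfying (i)--(iii)) and sufficiency (every such triple is realized). Part (i) is immediate since $H_1 \leq \fqt^*$ inherits cyclicity. For (ii), the given group law shows that $\psi : U_H \to \fqt$, $[1,b,c] \mapsto b$, is an additive homomorphism, so $H_2$ is an $\mathbb{F}_p$-subspace of $\fqt$. To upgrade this to $\mathbb{F}_p(H_1)$-linearity, I would use that $U_H = \ker \phi$ is normal in $H$: for $\alpha \in H_1$ pick any lift $[\alpha, b_\alpha, c_\alpha] \in H$, conjugate $[1,b,c] \in U_H$ by it, and read off from the group law that the resulting element of $U_H$ has $b$-component $\alpha^{-1} b$. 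Hence $H_2$ is closed under multiplication by $H_1$, and thus is an $\mathbb{F}_p(H_1)$-vector space.

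For (iii), the defining relation of $A(P_\infty)$ forces $c^q + c = 0$ whenever $[1,0,c]$ is an automorphism, and additivity of $H_3$ again follows from the group law. The same conjugation, specialized to $[1,0,c]$, produces a conjugate of the form $[1,0,\alpha^{-(q+1)} c]$, giving closure under $H_1^{q+1}$ and thus $\mathbb{F}_p(H_1^{q+1})$-linearity. The inclusion $W \subseteq H_3$ reflects the nonabelian structure of $U_H$: in odd characteristic a direct computation shows that the commutator of $[1,b_1,c_1]$ and $[1,b_2,c_2]$ equals $[1,0, b_1 b_2^q - b_2 b_1^q]$, hence lies in $H_3$; in characteristic $2$ the square $[1,b,c]^2$ collapses to $[1,0,b^{q+1}]$, forcing $b^{q+1} \in H_3$.

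For the converse, given a triple satisfying (i)--(iii), I would construct $H$ as the subgroup of $A(P_\infty)$ generated by $\{[\alpha,0,0] : \alpha \in H_1\}$, $\{[1,0,c] : c \in H_3\}$, and a system $\{[1, b, c_b] : b \in H_2\}$ where for each $b\in H_2$ the element $c_b \in \fqt$ is any fixed solution of $c_b^q + c_b = b^{q+1}$. From the conjugation identity
\begin{equation*}
[\alpha,0,0] \circ [1,b,c] \circ [\alpha,0,0]^{-1} = [1, \alpha^{-1} b, \alpha^{-(q+1)} c],
\end{equation*}
together with (ii) and (iii), the generating set is stable under conjugation by the $[\alpha,0,0]$; moreover the discrepancy between $[1,b_1,c_{b_1}] \circ [1,b_2,c_{b_2}]$ and $[1, b_1+b_2, c_{b_1+b_2}]$ is a $c$-component that, by the commutator/square calculation above, lies in $W$, and hence in $H_3$ by hypothesis. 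It follows that every element of $H$ has the form $[\alpha, b, c_b + c']$ with $\alpha \in H_1$, $b \in H_2$, $c' \in H_3$, and one recovers $H_1, H_2, H_3$ as the images and kernels of $\phi$ and $\psi$.

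The main technical hurdle I anticipate is the sufficiency direction, specifically pinning down $\ker \psi$ as exactly $H_3$ rather than something larger; the condition $W \subseteq H_3$ is precisely what prevents iterated products, commutators, or (in characteristic $2$) squares from enlarging the $c$-component beyond $H_3$. A secondary subtlety, which justifies the ``up to conjugation'' proviso in the forward direction, is a preliminary normalization of $H$: the equation $(\alpha - 1) \beta = -b_\alpha$ is solvable in $\fqt$ whenever $\alpha \ne 1$, so conjugating $H$ by a suitable $[1,\beta,\gamma] \in A(P_\infty)$ lets us arrange $b_\alpha = 0$ for every $[\alpha, b_\alpha, c_\alpha] \in H$ with $\alpha \ne 1$, after which the conjugation computations in (ii) and (iii) apply transparently.
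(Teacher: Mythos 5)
First, a remark on the comparison: the paper offers no proof of this statement at all --- it is quoted verbatim as Theorem 3.6 of \cite{bassa} and used as a black box --- so your attempt has to be judged on its own merits rather than against an argument in the text. Your forward direction is essentially correct. The conjugation computation gives $[\alpha,\beta,\gamma]\circ[1,b,c]\circ[\alpha,\beta,\gamma]^{-1}=[1,\alpha^{-1}b,\alpha^{-(q+1)}(c+b^q\beta-\beta^qb)]$, which yields closure of $H_2$ under multiplication by $H_1$ and (taking $b=0$, where the extra term vanishes) closure of $H_3$ under $H_1^{q+1}$ for \emph{any} lift of $\alpha$; so no normalization is actually needed for (i)--(iii), and your closing remark about arranging $b_\alpha=0$ for \emph{every} element with $\alpha\neq 1$ is in any case not achievable when $H_2\neq\{0\}$ (multiply $[\alpha,0,c]$ by $[1,b,c']$). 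The commutator and squaring identities you use for $W\subseteq H_3$ are correct.

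The genuine gap is in the converse, precisely where you anticipated trouble, and your proposed resolution fails. If $c_b$ is an \emph{arbitrary} solution of $c_b^q+c_b=b^{q+1}$, the discrepancy $c_{b_1}+c_{b_2}+b_2^qb_1-c_{b_1+b_2}$ does \emph{not} lie in $W$ in general: it lies in $\{c\in\fqt\mid c^q+c=0\}$, and changing the choice of $c_{b_1+b_2}$ shifts it by an arbitrary element of that $q$-element group. Concretely, take $q=p\ge 3$, $H_1=\{1\}$, $H_2=\mathbb{F}_p b_0$, $H_3=\{0\}$ (legitimate, since here $W=\{0\}$); if $c_{2b_0}\neq 2c_{b_0}+b_0^{q+1}$, then $[1,b_0,c_{b_0}]^2\circ[1,2b_0,c_{2b_0}]^{-1}=[1,0,\iota]$ with $\iota\neq 0$ lies in your generated group, so $\ker\psi\supsetneq H_3$ and the triple realized is not the prescribed one. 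What is needed is a \emph{coherent} section $b\mapsto c_b$. For $p$ odd, $c_b=b^{q+1}/2$ works: the discrepancy becomes $\tfrac12(b_1b_2^q-b_2b_1^q)$, which lies in $H_3$ by (iii) since $H_3$ is an $\mathbb{F}_p$-space containing $W$. For $p=2$ one must construct $\epsilon:H_2\to\fqt$ with $\epsilon(b)^q+\epsilon(b)=b^{q+1}$ and $\epsilon(b_1+b_2)+\epsilon(b_1)+\epsilon(b_2)+b_2^qb_1\in H_3$, e.g.\ by defining $\epsilon$ on an $\mathbb{F}_2$-basis and extending, using that $H_3$ contains all $b^{q+1}$ and hence all $b_1^qb_2+b_1b_2^q$. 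Equivalently and more structurally: $\hat U=\{[1,b,c]\mid b\in H_2,\ c^q+c=b^{q+1}\}$ is a group with central subgroup $\{[1,0,c]\mid c^q+c=0\}$, every element satisfies $[1,b,c]^p=[1,0,\binom{p}{2}b^{q+1}]$, and condition (iii) says exactly that $\hat U/H_3$ is elementary abelian, so the projection onto $H_2$ splits; the image of a splitting, together with $H_3$ and the $[\alpha,0,0]$, generates the desired $H$. Without an argument of this kind the sufficiency direction is not established.
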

Given the triple $(H_1,H_2,H_3)$ the genus of the fixed field of $H$ can be determined in terms of $h_1,\#H_2$ and $\#H_3$, see \cite{GSX}.
Theorem \ref{thm:classifyH} characterizes all possible subgroups $H \leq A(P_\infty)$ up to conjugation, but to make this result algorithmic one needs to be able to compute all possibilities for $(h_1, \#H_2,\#H_3)$ in a fast way. In \cite{bassa} for a given odd $q$ all possible values of $h_1,\#H_2$ and $\#H_3$ have been determined explicitly, while for even $q$ many (but possibly not all) possibilities were listed. In this algorithmic sense, the case $q=2$ is not completely settled.

\section{Subgroups of the automorphism group of the GGK function field.}

For any odd $n \ge 1$, the generalized Giulietti--Korchm\'aros (GGK) function field $\mathcal C_n/\fqn$ was introduced in \cite{GGS}. It is defined as $\mathcal C_n=\fqn(x,y,z)$ satisfying the equations
$$x^q+x=y^{q+1} \quad \text{and} \quad z^m=y^{q^2}-y \quad \text{with  } m=\frac{q^n+1}{q+1} \ .$$
The function field $\mathcal C_n$ is maximal over $\fqn$ and has genus $g(\mathcal C_n)=(q-1)(q^{n+1}+q^n-q^2)/2$. The function $z$ has a unique pole in $\mathcal C_n$, which we denote by $Q_\infty$. As mentioned previously $\mathcal C_1=\mathcal H$, the Hermitian function field, and $\mathcal C_3=\mathcal C$, the GK function field.

The automorphism group $B:=\aut(\mathcal C_n)$ of $\mathcal C_n$ has been determined in \cite{auto1,auto2}. The stabilizer of $Q_\infty$, which we will denote by $B(Q_\infty)$, is in most cases equal to the entire automorphism group. More precisely, we have
\begin{equation}\label{eq:index}
[B:B(Q_\infty)]= \left\{ \begin{array}{rl} q^3+1 & \makebox{ if $n \le 3$}\\ 1 & \makebox{ if $n \ge 5$}\end{array}\right.
\end{equation}
Note that for $n=1$, we simply have $Q_\infty=P_\infty$ and $B(Q_\infty)=A(P_\infty)$. For general odd $n$, the group $B(Q_\infty)$ can be described explicitly. All automorphisms $\sigma \in B(Q_\infty)$ can be obtained in the following way: for $a \in \fqt^*, b,c \in \fqt$ and $d \in \fqn$, define
\begin{equation*}
  \sigma(x)=a^{q+1}x+ab^qy+c , \quad \sigma(y)=ay+b \quad \text{and}\quad \sigma(z)=dz \quad \text{with } c^q+c=b^{q+1} \ , \;  d^m=a \ .
\end{equation*}
It will be convenient to write $[a,b,c,d]$ for $\sigma$. Then we have
\begin{equation}\label{eq:autC}
B(Q_\infty)=\{[a,b,c,d] \, | \, a \in \fqt^*, b,c \in \fqt \ \makebox{ where } \ c^q+c=b^{q+1} \ \makebox{ and } \ d^m=a\} \ .
\end{equation}
Note that for any $a \in \fqt^*$, the equation $d^m=a$ has $m$ distinct solutions in $\fqn$.
We obtain that $\# B(Q_\infty)=mq^3(q^2-1)$. The group law is easily seen to be
\begin{equation}\label{eq:grouplaw}
[a',b',c',d']\circ[a,b,c,d]=[a'a,ab'+b,a^{q+1}c'+ab^{q}b'+c,d'd]
\end{equation}

It is clear from Equations \eqref{eq:autH} and \eqref{eq:autC} that the map
\begin{equation*}
\pi: B(Q_\infty) \rightarrow A(P_\infty) \; \ \makebox{defined by} \ \; \pi([a,b,c,d])=[a,b,c]
\end{equation*}
is a surjective group homomorphism. Note that the GGK function field over $\fqn$ contains the Hermitian function field over $\fqt$ as a subfield. Then the map $\pi$ corresponds to restricting automorphisms of the GGK function field to this subfield.

For any $[a,b,c,d] \in B(Q_\infty)$, we have $$d^{(q^n+1)(q-1)}=d^{m(q^2-1)}=a^{q^2-1}=1\ .$$ Hence $d \in \mu$, where $\mu \leq \fqn^*$ is the multiplicative group of $(q^n+1)(q-1)$th roots of unity.
\begin{definition}\label{def:G0G1}
Write $\pi_d: B(Q_\infty) \rightarrow \mu$ for the map given by $\pi_d([a,b,c,d]):=d.$ Then for any $G \subset B(Q_\infty)$, we define $G_0:=\pi_d(G) \subset \mu$ and $g_0:=\#G_0$.
Similarly, write $\pi_a: B(Q_\infty) \rightarrow \fqt^*$ for the map given by $\pi_a([a,b,c,d]):=a$. Then we define $G_1:=\pi_a(G)$ and $g_1:=\#G_1$.
\end{definition}

Note that $\pi_d$ and $\pi_a$ are group homomorphisms, and that $\pi_a=e_m \circ \pi_d$, where $e_m$ is the $m$th power map. We denote by $\pi|_G$, $\pi_a|_G$ and $\pi_d|_G$ the restriction to $G$ of the maps $\pi$, $\pi_a$ and $\pi_d$. The map $\pi$ can be used to associate objects to a subgroup $G \leq B(Q_\infty)$. Indeed, writing $H=\pi(G) \in A(P_\infty)$, we can construct the triple $(H_1,H_2,H_3)$ from the previous section. To stress the dependency on $G$, we will write $(G_1,G_2,G_3)$ instead. At first sight this gives a problem, since the notation $G_1$ was already used in Definition \ref{def:G0G1}. However, a direct computation shows that
$$e_m\circ \pi_d|_G=\pi_a|_G=\phi\circ\pi|_G,$$ with $\phi: \pi(G) \rightarrow \fqt^*$ given by $\phi([a,b,c])=a$, just as in the previous section. Therefore the two definitions for $G_1$ actually give rise to the same group. Analogously to the Hermitian function field case, we also define $g_w:=\#\ker(\pi_d|_G)$.

The situation is as depicted in the following picture.
\begin{displaymath}
\xymatrix{
&{\rm id} \ar@{->}[d]\\
&G_3 \ar@{->}[d]&G\ar@{->}[d]_{\pi|_G} \ar@{->}[dr]^{\pi_a|_G}\ar@{->}[r]^{\pi_d|_G}&G_0 \ar@{->}[d]^{e_m} \ar@{->}[r] &{\rm id}\\
{\rm id} \ar@{->}[r]& U_{\pi(G)}\ar@{->}[r] \ar@{->}[d]_{\psi}& \pi(G) \ar@{->}[r]_{\phi} & G_1 \ar@{->}[r] & {\rm id}\\
&G_2 \ar@{->}[d]\\
&{\rm id}\\
}
\end{displaymath}
With this notation, we obtain the following.
\begin{lemma}\label{lem:cardGpiG}
Let $G \leq B(Q_\infty)$ be a subgroup. Then we have
\begin{enumerate}
\item[(i)] $\# G=g_0g_w,$
\item[(ii)]$\#\pi(G)=g_1g_w.$
\end{enumerate}
\end{lemma}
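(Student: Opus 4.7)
For part (i), the plan is to apply the first isomorphism theorem to the surjective homomorphism $\pi_d|_G : G \twoheadrightarrow G_0$, whose kernel has order $g_w$ by definition; this immediately gives $\#G = g_0 g_w$.

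For part (ii), I would apply the first isomorphism theorem instead to $\phi|_{\pi(G)} : \pi(G) \twoheadrightarrow G_1$, which is surjective because $\phi \circ \pi|_G = \pi_a|_G$ has image $G_1$. The resulting identity $\#\pi(G) = g_1 \cdot \#U_{\pi(G)}$ reduces the problem to proving $\#U_{\pi(G)} = g_w$. The natural candidate map is the restriction of $\pi$ to $\ker(\pi_d|_G) = \{[1,b,c,1] \in G\}$, which sends $[1,b,c,1] \mapsto [1,b,c] \in U_{\pi(G)}$; injectivity is immediate since the image determines $b$ and $c$. The whole question thus becomes whether this map is surjective, i.e.\ whether every $[1,b,c] \in U_{\pi(G)}$ admits a lift $[1,b,c,1] \in G$ rather than merely a lift $[1,b,c,d] \in G$ with $d^m=1$.

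This surjectivity is the main obstacle, and I would tackle it by exploiting the fact that $B(Q_\infty)$ decomposes naturally into a $p$-part and a part of order coprime to $p$. A direct check from the group law \eqref{eq:grouplaw} shows that $u := [1,b,c,1]$ and $v := [1,0,0,d]$ commute in $B(Q_\infty)$ and that $uv = [1,b,c,d]$. Since the elements of the form $[1,b',c',1]$ form a subgroup of order $q^3$, the element $u$ has order a power of $p$, say $p^k$. Then for $g:=[1,b,c,d] \in G$ one obtains
\begin{equation*}
g^{p^k} = u^{p^k} v^{p^k} = v^{p^k} = [1,0,0,d^{p^k}] \in G.
\end{equation*}
Because $\mu$ has order $(q^n+1)(q-1)$ coprime to $p$, the element $d$ has order coprime to $p$, whence $d \in \langle d^{p^k} \rangle$ and consequently $[1,0,0,d] \in G$. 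The identity $[1,b,c,1] = g \cdot [1,0,0,d]^{-1}$ then produces the desired element of $G$, which completes the surjectivity argument and thereby the proof.
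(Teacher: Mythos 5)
Your proof is correct and follows essentially the same route as the paper: both parts are applications of the first isomorphism theorem, to $\pi_d|_G$ for (i) and to $\phi$ on $\pi(G)$ for (ii), reduced to the identification $\#U_{\pi(G)}=\#\ker(\pi_d|_G)$. The one place you go beyond the paper is worth noting: the paper simply asserts ``$\#\ker\phi=\#\ker\pi_d|_G$'' after writing both kernels out, whereas you correctly isolate the nontrivial content of that assertion --- that every $[1,b,c]\in U_{\pi(G)}$ lifts to an element $[1,b,c,1]\in G$ with fourth coordinate exactly $1$ --- and prove it by splitting $[1,b,c,d]$ into the commuting factors $[1,b,c,1]$ (of $p$-power order) and $[1,0,0,d]$ (of order prime to $p$, since $\#\mu=(q^n+1)(q-1)$ is prime to $p$), then raising to a suitable $p$-power. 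This is precisely the trick the authors themselves deploy later, in the proof of Theorem~\ref{thm:GGKreducetoH}, where they compute $[1,\tilde{b},\tilde{c},d\tilde{d}^{-1}]^{p^2}=[1,0,0,(d\tilde{d}^{-1})^{p^2}]$ and then pass to the $q^{2n}$th power; so your argument supplies, in the right place, a justification the paper defers. All of your intermediate claims (the commutation of $u$ and $v$, the order-$q^3$ subgroup $\{[1,b',c',1]\}$, and $d\in\langle d^{p^k}\rangle$) check out against the group law \eqref{eq:grouplaw}.
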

\begin{proof}
The different maps and groups are depicted in the following commutative diagram.

\begin{displaymath}
\xymatrix{
G\ar@{->}[d]_{\pi|_G}\ar@{->}[dr]^{\pi_a|_G}\ar@{->}[r]^{\pi_d|_G}&\mu\ar@{->}[d]^{e_m} \\
\pi(G)\ar@{->}[r]_{\phi}& \mathbb{F}_{q^2}^{\star}
}
\end{displaymath}
Considering the map $\pi_d|_G$, we deduce $$\#G=\#\im \pi_d|_G \#\ker \pi_d|_G=\#G_0 g_w=g_0g_w \ .$$ Note that $\ker \pi_d|_G=\{[1,b,c,1] \in G\}$ and $\ker \phi =\{[1,b,c] \in \pi(G)\}$. Hence $\#\ker \phi=\#\ker \pi_d|_G=g_w$. Therefore
$$\# \pi(G)=\# \phi(\pi(G)) \# \ker \phi=\# \pi_a(G) g_w=g_1g_w\ .$$
\end{proof}

Also we can deduce the following relation between $g_0$ and $g_1$.
\begin{lemma}\label{lem:g0g1}
Let $G \leq B(Q_\infty)$ be given. Then $$G_1=G_0^m \; \ \makebox{and} \ \; g_1=\frac{g_0}{\gcd(g_0,m)} \ .$$
\end{lemma}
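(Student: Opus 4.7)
The plan is to exploit the factorization $\pi_a = e_m \circ \pi_d$ that was already recorded after Definition \ref{def:G0G1}, together with the fact that $\mu$ is a cyclic group.

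First, I would establish the identity $G_1 = G_0^m$ by direct computation: since $\pi_a|_G = e_m \circ \pi_d|_G$, taking images gives
\[
G_1 = \pi_a(G) = e_m(\pi_d(G)) = e_m(G_0) = G_0^m,
\]
which is the first claim.

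Next, to compute $g_1$, I would observe that $G_0$ is a subgroup of $\mu$, and that $\mu$ (the multiplicative group of $(q^n+1)(q-1)$th roots of unity in $\fqn^*$) is cyclic, being a finite subgroup of the multiplicative group of a field. Hence $G_0$ is itself cyclic of order $g_0$. The restriction of $e_m$ to $G_0$ is then the $m$th power map on a cyclic group of order $g_0$, and a standard calculation (writing $G_0 = \langle \zeta \rangle$ and noting that $\zeta^m$ has order $g_0/\gcd(g_0,m)$) gives
\[
g_1 = \#G_0^m = \frac{g_0}{\gcd(g_0,m)}.
\]

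There is no genuine obstacle here: the statement is essentially a consequence of the commutative triangle in the diagram displayed just before Lemma \ref{lem:cardGpiG}, combined with the cyclicity of finite multiplicative subgroups of a field. The only point that deserves a brief mention in the write-up is why $\mu$ is cyclic, since that is what ensures $G_0$ is cyclic and makes the formula $\#G_0^m = g_0/\gcd(g_0,m)$ valid.
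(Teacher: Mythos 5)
Your proposal is correct and follows essentially the same route as the paper: the identity $G_1=G_0^m$ is read off from $\pi_a|_G=e_m\circ\pi_d|_G$, and the cardinality formula comes from the cyclicity of $G_0\le\mu$ together with the standard behaviour of the $m$th power map on a cyclic group (the paper counts the kernel of $e_m$ on $G_0$, you compute the order of a generator's $m$th power — the same calculation). No gaps.
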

\begin{proof}
The identity $G_1=G_0^m$ follows from the fact that $d^m=a$ for all $[a,b,c,d] \in G$, or in other words from the identity $e_m\circ \pi_d|_G=\pi_a|_G$. Since $G_0$ is a subgroup of the cyclic group $\mu$, it is itself cyclic of order $g_0$. Therefore the kernel of the $m$th power map from $G_0$ to $G_0^m$ has cardinality $\gcd(g_0,m)$. This implies that the $\# G_0^m=\#G_0 / \gcd(g_0,m)$, which proves the claim.
\end{proof}

\begin{remark} \label{rem:kernel} Using the previous two lemmas we see that
$\#\ker\pi|_G=\#G/\#\pi(G)={\rm gcd}(g_0,m)$.
\end{remark}

We now describe the subgroups of $B(Q_\infty)$ in terms of subgroups of $\mu$ and of $A(P_\infty)$.

\begin{theorem}\label{thm:GGKreducetoH}
Let $\Sigma$ be the set of pairs $(H,M)$ such that $H \le A(P_\infty)$, $M \leq \mu$ and $M^m=\phi(H)$ and let $\Xi$ for any $G \le B(Q_\infty)$ be defined as $\Xi(G)=(\pi(G),\pi_d(G))$. Then the map $\Xi$ gives a bijection between the set of subgroups of $B(Q_\infty)$ and $\Sigma$.
\end{theorem}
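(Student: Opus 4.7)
My plan is to construct an explicit two-sided inverse $\Psi \colon \Sigma \to \{\text{subgroups of } B(Q_\infty)\}$ to $\Xi$, and verify that $\Xi\circ \Psi$ and $\Psi \circ \Xi$ are both the identity. First I would check that $\Xi$ actually lands in $\Sigma$: given $G \le B(Q_\infty)$, the identity $\phi \circ \pi|_G = e_m \circ \pi_d|_G$ recorded just after Definition~\ref{def:G0G1} together with Lemma~\ref{lem:g0g1} gives $\pi_d(G)^m = \phi(\pi(G))$, so $(\pi(G),\pi_d(G)) \in \Sigma$.

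For the inverse, given $(H,M)\in \Sigma$ I would set
$$\Psi(H,M) := G_{H,M} := \{[a,b,c,d] \in B(Q_\infty) \mid [a,b,c] \in H,\ d \in M\}.$$
The group law~\eqref{eq:grouplaw} is componentwise in the sense that the first three coordinates follow the $A(P_\infty)$-law (which closes inside $H$) while the fourth is ordinary multiplication in $\mu$ (which closes inside $M$), so $G_{H,M}$ is automatically a subgroup. To check $\Xi(G_{H,M})=(H,M)$, the inclusions $\pi(G_{H,M}) \subseteq H$ and $\pi_d(G_{H,M}) \subseteq M$ are immediate; the reverse inclusions use the hypothesis $M^m=\phi(H)$ as a lifting device. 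Any $[a,b,c] \in H$ has $a \in M^m$ and so admits some $d \in M$ with $d^m=a$, producing $[a,b,c,d] \in G_{H,M}$; symmetrically, any $d \in M$ has $d^m \in \phi(H)$, and therefore occurs as the fourth coordinate of some element of $G_{H,M}$.

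The step I expect to be the main obstacle is the remaining equality $G = G_{\pi(G),\pi_d(G)}$, because the inclusion $G_{\pi(G),\pi_d(G)} \subseteq G$ is not tautological: an element on the right combines a triple coming from one element of $G$ with a fourth coordinate coming from possibly another, and a direct construction of the witness inside $G$ is awkward. Rather than go that route, I would argue by a cardinality comparison. Counting fibres of the $m$-power map on the cyclic group $M$ (each of size $\gcd(\#M,m)$) yields
$$\#G_{H,M} = \#H \cdot \gcd(\#M,m),$$
and specializing to $H=\pi(G)$, $M=G_0$ and invoking Lemmas~\ref{lem:cardGpiG} and~\ref{lem:g0g1} gives
$$\#G_{\pi(G),\pi_d(G)} = g_1 g_w \cdot \gcd(g_0,m) = g_0 g_w = \#G,$$
forcing equality. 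Combining this with the preceding steps shows that $\Xi$ is a bijection with inverse $\Psi$.
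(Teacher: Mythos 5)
Your proposal is correct, and the first two thirds of it coincide with the paper's argument: your $\Psi(H,M)=G_{H,M}$ is exactly the paper's candidate $\pi^{-1}(H)\cap\pi_d^{-1}(M)$, and the verification that $\Xi(G_{H,M})=(H,M)$ via the lifting device $M^m=\phi(H)$ is the same in both. Where you genuinely diverge is on the crux, the equality $G=\pi^{-1}(\pi(G))\cap\pi_d^{-1}(\pi_d(G))$. The paper proves this by element-chasing: given $g=[a,b,c,d]$ in the intersection it picks $[a,b,c,\tilde d]\in G$, reduces to showing $[1,0,0,d\tilde d^{-1}]\in G$, lifts that element to some $[1,\tilde b,\tilde c,d\tilde d^{-1}]\in G$, and kills the middle coordinates by raising to the power $q^{2n}$ (using that the unipotent part has order dividing $p^2$ while $d\tilde d^{-1}\in\fqn$ is fixed by $x\mapsto x^{q^{2n}}$). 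You instead observe the obvious inclusion $G\subseteq G_{\pi(G),\pi_d(G)}$ and close the gap by counting: $\#G_{H,M}=\#H\cdot\gcd(\#M,m)$, which with Lemmas \ref{lem:cardGpiG} and \ref{lem:g0g1} gives $g_1g_w\gcd(g_0,m)=g_0g_w=\#G$. This is valid and arguably cleaner as written, though it is worth noting that the load-bearing input, the identity $\#\ker\phi=\#\ker\pi_d|_G$ inside Lemma \ref{lem:cardGpiG}, is only asserted tersely in the paper and its honest justification (lifting $[1,b,c]\in\pi(G)$ to $[1,b,c,1]\in G$) is a power-raising argument of exactly the same flavour as the one the paper deploys in the theorem itself; so the two routes are closer than they appear, and yours trades an explicit computation in the theorem for a heavier reliance on that lemma. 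What the counting argument buys is a proof that is uniform and avoids the characteristic-specific manipulation of the group law; what the paper's argument buys is self-containedness and an explicit description of why a stray fourth coordinate cannot occur.
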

\begin{proof}
Note that Lemma \ref{lem:g0g1} implies that $\Xi(G) \in \Sigma$. First we show surjectivity of $\Xi$. Let $H \le A(P_\infty)$ and $M \le \mu$ be subgroups with $M^m=\phi(H)$. Then we claim that $G:=\pi^{-1}(H) \cap \pi_d^{-1}(M)$ has the property that $\Xi(G)=(H,M)$. In other words, we need to show that
\begin{equation*}
\pi(\pi^{-1}(H) \cap \pi_d^{-1}(M))=H \ \makebox{and} \ \pi_d(\pi^{-1}(H) \cap \pi_d^{-1}(M))=M.
\end{equation*}
It is clear that $\pi(\pi^{-1}(H) \cap \pi_d^{-1}(M)) \subset \pi(\pi^{-1}(H)) =H$ and $\pi_d(\pi^{-1}(H) \cap \pi_d^{-1}(M)) \subset \pi_d(\pi_d^{-1}(M)) =M$. On the other hand, using that $M^m=\phi(H)$, we see that for any $[a,b,c] \in H$ there exists $d \in M$ such that $d^m=a$. This implies $[a,b,c,d] \in \pi^{-1}(H) \cap \pi_d^{-1}(M)$, whence $[a,b,c] \in \pi(\pi^{-1}(H) \cap \pi_d^{-1}(M)).$
Similarly for any $d \in M$ there exists $[a,b,c] \in H$ such that $a=d^m$. Then $[d^m,b,c,d] \in  \pi^{-1}(H) \cap \pi_d^{-1}(M)$ and hence $d \in \pi_d(\pi^{-1}(H) \cap \pi_d^{-1}(M))$. This shows that $\Xi$ is surjective.

Now we show that $\Xi$ is injective. Let $G \leq B(Q_\infty)$ be chosen arbitrarily. By definition we have $\Xi(G)=(\pi(G),\pi_d(G))$. The proof of the surjectivity of $\Xi$ implies that the subgroup $\pi^{-1}(\pi(G)) \cap \pi_d^{-1}(\pi_d(G))$ has the same image as $G$ under $\Xi$. To show injectivity, it is enough to show that
\begin{equation}\label{eq:intersection}
G=\pi^{-1}(\pi(G)) \cap \pi_d^{-1}(\pi_d(G)).
\end{equation}
Indeed, if $\Xi(G)=\Xi(\tilde{G})$, then Equation \eqref{eq:intersection} would imply that
$$G=\pi^{-1}(\pi(G)) \cap  \pi_d^{-1}(\pi_d(G)) = \pi^{-1}(\pi(\tilde{G})) \cap \pi_d^{-1}(\pi_d(\tilde{G}))=\tilde{G},$$
where the middle equality uses $\Xi(G)=\Xi(\tilde{G})$ and the first (resp. last) equality uses Equation \eqref{eq:intersection} for the subgroup $G$ (resp. $\tilde{G}$).
Now we prove Equation \eqref{eq:intersection} itself.

It is easy to see that $G \subset \pi^{-1}(\pi(G))$ and $G \subset \pi_d^{-1}(\pi_d(G))$, implying that $G \subset \pi^{-1}(\pi(G)) \cap \pi_d^{-1}(\pi_d(G))$. What is left is to show the reverse inclusion. Let $g \in \pi^{-1}(\pi(G)) \cap \pi_d^{-1}(\pi_d(G))$ and write $g=[a,b,c,d]$. Since in particular $g \in \pi^{-1}(\pi(G))$, there exists $\tilde{d}$ such that $[a,b,c,\tilde{d}] \in G$.
Then
\begin{equation}\label{eq:help}
[a,b,c,d]\circ[a,b,c,\tilde{d}]^{-1}=[1,0,0,d\tilde{d}^{-1}].
\end{equation}
Since $G \subset \pi^{-1}(\pi(G)) \cap \pi_d^{-1}(\pi_d(G)),$ we see that $[1,0,0,d\tilde{d}^{-1}] \in \pi_d^{-1}(\pi_d(G)) \cap \pi^{-1}(\pi(G))$ and moreover Equation \eqref{eq:help} implies that
\begin{equation}\label{eq:help2}
g \in G \ \makebox{ if and only if} \ [1,0,0,d\tilde{d}^{-1}] \in G
\end{equation}
Since $[1,0,0,d\tilde{d}^{-1}] \in \pi^{-1}(\pi(G)) \cap \pi_d^{-1}(\pi_d(G))$, in particular $[1,0,0,d\tilde{d}^{-1}] \in \pi_d^{-1}(\pi_d(G))$, implying that there exist $\tilde{b}$ and $\tilde{c}$ such that $[1,\tilde{b},\tilde{c},d\tilde{d}^{-1}] \in G$.

Now note that by Equation \eqref{eq:grouplaw} $$[1,\tilde{b},\tilde{c},d\tilde{d}^{-1}]^p=[1,0,c',\left(d\tilde{d}^{-1}\right)^p]$$ for a certain $c'$ and hence
$$[1,\tilde{b},\tilde{c},d\tilde{d}^{-1}]^{p^2}=[1,0,0,\left(d\tilde{d}^{-1}\right)^{p^2}].$$
Since $d\tilde{d}^{-1} \in \fqn$, this implies that $[1,0,0,d\tilde{d}^{-1}]=[1,0,0,(d\tilde{d}^{-1})^{q^{2n}}]=[1,\tilde{b},\tilde{c},d\tilde{d}^{-1}]^{q^{2n}}\in G.$ By Equation \eqref{eq:help2}, we conclude that $g \in G$ as desired. This shows that Equation \eqref{eq:intersection} holds, and thus completes the proof of the theorem.
\end{proof}

Combining Theorems \ref{thm:classifyH} and \ref{thm:GGKreducetoH}, we deduce the following characterization of all subgroups $G \leq B(Q_\infty)$.
\begin{theorem}\label{thm:classifyGGK}
For a subgroup $G \leq B(Q_\infty)$, let $G_0,G_2$ and $G_3$ be as above. Then up to conjugation we have the following:
\begin{enumerate}
\item[(i)] $G_0$ is a cyclic subgroup of $\mu$, the group of $m(q^2-1)$th roots of unity in $\fqn^*$.
\item[(ii)] $G_2 \subset \fqt$ is a vector space over $\mathbb{F}_p(G_0^m)$.
\item[(iii)] $G_3\subset \{c \in \fqt \mid c^q+c=0\}$ is a vector space over $\mathbb{F}_p(G_0^{q^n+1})$ containing $W$, where $W=\{b_1b_2^q-b_2b_1^q \, | \, b_1,b_2 \in G_2\}$ if $p$ is odd and $W=G_2^{q+1}=\{b^{q+1} \, | \, b \in G_2\}$ if $p=2$.
\end{enumerate}
Conversely, for any $G_0$, $G_2$, $G_3$ satisfying (i), (ii) and (iii) there exists a subgroup $G \leq B(Q_\infty)$ giving rise to this triple.
\end{theorem}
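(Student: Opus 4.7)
The plan is to combine the Hermitian classification (Theorem \ref{thm:classifyH}) with the bijection $\Xi$ of Theorem \ref{thm:GGKreducetoH}. The key observation is that, by definition, the groups $G_2, G_3$ attached to a subgroup $G \leq B(Q_\infty)$ coincide with the groups $H_2, H_3$ attached to $H := \pi(G) \leq A(P_\infty)$, while $G_0 = \pi_d(G)$ records the additional data needed to recover $G$ from $H$. The translation between the two settings rests on two identities: first, $H_1 = \phi(H) = G_0^m$ by Lemma \ref{lem:g0g1}; and second, $H_1^{q+1} = G_0^{m(q+1)} = G_0^{q^n+1}$, since $m(q+1) = q^n+1$. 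These give $\mathbb{F}_p(H_1) = \mathbb{F}_p(G_0^m)$ and $\mathbb{F}_p(H_1^{q+1}) = \mathbb{F}_p(G_0^{q^n+1})$, which converts the Hermitian vector-space conditions verbatim into the desired conditions (ii) and (iii).

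For the forward direction, I would start with an arbitrary $G \leq B(Q_\infty)$. Theorem \ref{thm:classifyH} applied to $H = \pi(G)$ yields some $\sigma \in A(P_\infty)$ such that $\sigma H \sigma^{-1}$ has normalized triple. Lifting $\sigma$ to some $\tilde\sigma \in B(Q_\infty)$ via the surjection $\pi$ and replacing $G$ by $\tilde\sigma G \tilde\sigma^{-1}$, the new $(G_2, G_3)$ agree with the normalized $(H_2, H_3)$. Crucially, the invariant $G_0$ is unchanged under any $B(Q_\infty)$-conjugation: since $\mu$ is abelian and $\pi_d$ is a homomorphism, $\pi_d(\tilde\sigma g \tilde\sigma^{-1}) = \pi_d(g)$ for every $g \in G$. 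Condition (i) is automatic since $G_0 \leq \mu$ and $\mu$ is cyclic, and conditions (ii), (iii) follow from the corresponding Hermitian conditions via the identities displayed above.

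For the converse, given $(G_0, G_2, G_3)$ satisfying (i)--(iii), I set $H_1 := G_0^m \leq \fqt^*$, $H_2 := G_2$ and $H_3 := G_3$. Cyclicity of $G_0$ forces cyclicity of $H_1$, and the two identities above show that $(H_1, H_2, H_3)$ satisfies the hypotheses of Theorem \ref{thm:classifyH}. That theorem produces a subgroup $H \leq A(P_\infty)$ realizing this triple. The pair $(H, G_0)$ lies in the set $\Sigma$ of Theorem \ref{thm:GGKreducetoH}, since $G_0^m = H_1 = \phi(H)$, so applying $\Xi^{-1}$ yields $G \leq B(Q_\infty)$ with $\pi(G) = H$ and $\pi_d(G) = G_0$, and hence with associated triple $(G_0, G_2, G_3)$.

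The main obstacle is essentially cosmetic: the structural content sits in Theorems \ref{thm:classifyH} and \ref{thm:GGKreducetoH}, and what is left is to verify that the ``up to conjugation'' normalization in the Hermitian case lifts to $B(Q_\infty)$ without disturbing $G_0$. This reduces to the two small observations that $\pi$ is surjective (so every $A(P_\infty)$-conjugation is realized by a $B(Q_\infty)$-conjugation) and that $\pi_d$ takes values in the abelian group $\mu$ (so $G_0$ is conjugation-invariant), together with the arithmetic identity $m(q+1)=q^n+1$.
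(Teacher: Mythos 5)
Your proposal is correct and follows exactly the route the paper intends: the paper gives no explicit proof, stating only that the theorem follows by ``combining Theorems \ref{thm:classifyH} and \ref{thm:GGKreducetoH}'', and your argument is precisely that combination, with the translation identities $H_1=G_0^m$ and $H_1^{q+1}=G_0^{m(q+1)}=G_0^{q^n+1}$ plus the conjugation-invariance of $G_0$ (via the abelianness of $\mu$) filled in correctly. The details you supply --- lifting the normalizing conjugation through the surjection $\pi$ and checking that the pair $(H,G_0)$ lies in $\Sigma$ for the converse --- are exactly the ones the paper leaves implicit.
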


\begin{remark}
For $n \ge 5$ every automorphism of $\mathcal C_n$ fixes $Q_\infty$, so we have $B=B(Q_\infty)$ (see Equation \eqref{eq:index}). Hence in this case Theorem \ref{thm:classifyGGK} characterizes all subgroups of the automorphism group of the GGK function field.
\end{remark}

\section{Genus computation}

Let $G \leq B(Q_\infty)$ be a subgroup. We denote the fixed field of $G$ by $\mathcal C_n^G$. The aim of this section is to compute the genus of $\mathcal C_n^G$. As above, we let $G_0=\pi_d(G)$, $G_1=\pi_a(G)$, $U_{\pi(G)}=\ker \phi$, $G_2=\psi(U_{\pi(G)})$ and $G_3=\ker \psi$. Moreover, $g_0=\#G_0$, $g_1=\#G_1$, $g_w=\# U_{\pi(G)}=\#G_2\#G_3$ and $\#G=g_0g_w$.

We will adapt the approach in \cite{GSX} to calculate genera of Galois subfields of the GGK function field. A similar approach has been used in \cite{DO}. However since the genus formulas have not been worked out in full generality (it is assumed that the characteristic is odd in \cite{DO}), we do so below.

The fixed field of $B(Q_\infty)$ is $\fqn(t)$, where $t=z^{(q-1)(q^n+1)}$. Let us recall some details about the extension $\mathcal C_n/\fqn(t)$ and its ramification structure:
\begin{itemize}
  \item $(t=0)$ and $(t=\infty)$ are the only places ramified in the extension $\mathcal C_n/\fqn(t)$
  \item $(t=\infty)$ is totally ramified in the extension $\mathcal C_n/\fqn(t)$. The unique place of $\mathcal C_n$ lying over $(t=\infty)$ was denoted by $Q_{\infty}$, and its restriction to $\fqn(x,y)$ by $P_{\infty}$. A uniformizing element at $Q_{\infty}$ is given by $\tau=\frac{z^{q^n-3}}{x}$.
  \item $(t=0)$ is tamely ramified in  $\fqn(x,y)/\fqn(t)$ with ramification index $q^2-1$. The $q^3$ places of $\fqn(x,y)$ lying above $(t=0)$ are uniquely characterised by the value of $x$ and $y$ at those places, and hence will be denoted as
 $$\{P_{\alpha \beta}\;|\;\alpha,\beta \in \fqt \ , \alpha^q+\alpha=\beta^{q+1}\} \ .$$
Each place $P_{\alpha\beta}$ is totally ramified in the extension  $\mathcal C_n/\fqn(x,y)$, and we denote the unique place of $\mathcal C_n$ lying over it by $Q_{\alpha\beta}$.
 \end{itemize}

As shown in \cite{DO} for odd characteristic, the degree of the different divisor of the extension $\mathcal C_n/\mathcal C_n^G$ is given by
\begin{equation} \label{eq:different}
\deg\ {\rm Diff}(\mathcal C_n/\mathcal C_n^G)=\sum_{{\rm id}\neq g \in G} v_{Q_{\infty}}(g(\tau)-\tau)+\sum_{{\rm id}\neq g \in G} N(g) \ ,
\end{equation}
where  $v_{Q_{\infty}}$ is the valuation corresponding to the place $Q_{\infty}$, $\tau=z^{q^n-3}/x$ a uniformizing element at $Q_\infty$, and
$$N(g):=\# \{Q_{\alpha\beta}\;|\;g(Q_{\alpha\beta})=Q_{\alpha\beta}\}\ .$$
However, the proof given in \cite{DO} for Equation \eqref{eq:different} carries over directly to the even characteristic.
Since $Q_{\alpha\beta}|P_{\alpha\beta}$ is totally ramified in $\mathcal C_n/\fqn(x,y)$, we have the following conclusion.
\begin{lemma} \label{lem:orders}
$$N(g)=\# \{P_{\alpha\beta}\;|\;\pi(g)(P_{\alpha\beta})=P_{\alpha\beta}\}\ .$$
\end{lemma}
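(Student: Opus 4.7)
The plan is to exploit two facts already set up in the excerpt: the map $\pi$ is restriction of automorphisms of $\mathcal{C}_n$ to the subfield $\fqn(x,y)$, and each $P_{\alpha\beta}$ is totally ramified in $\mathcal{C}_n/\fqn(x,y)$ so that $Q_{\alpha\beta}$ is the \emph{unique} place of $\mathcal{C}_n$ lying above $P_{\alpha\beta}$. Together these give a bijection between the set of places $\{P_{\alpha\beta}\}$ and the set of places $\{Q_{\alpha\beta}\}$, compatible with the actions of $g$ and $\pi(g)$ respectively.

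First I would record the compatibility explicitly. For any $g \in G \le B(Q_\infty)$ and any place $P$ of $\fqn(x,y)$, if $Q$ is a place of $\mathcal{C}_n$ lying over $P$, then $g(Q)$ lies over $\pi(g)(P)$; this is because $\pi(g)$ is by construction the restriction of $g$ to $\fqn(x,y)$, so the associated valuations are related by $v_{g(Q)}|_{\fqn(x,y)} = v_{\pi(g)(P)}$. Applied with $P = P_{\alpha\beta}$ and $Q = Q_{\alpha\beta}$, this says $g(Q_{\alpha\beta})$ is a place of $\mathcal{C}_n$ lying above $\pi(g)(P_{\alpha\beta})$.

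Next, note that $\pi(g)(P_{\alpha\beta})$ is itself a place of $\fqn(x,y)$ lying above $(t=0)$ in the fixed field $\fqn(t)$, since $(t=0)$ is fixed by every automorphism in $B(Q_\infty)$ (the fixed field of $B(Q_\infty)$ is $\fqn(t)$, and in particular automorphisms permute places lying over $(t=0)$). Therefore $\pi(g)(P_{\alpha\beta}) = P_{\alpha'\beta'}$ for some pair $(\alpha',\beta')$ with $\alpha'^q+\alpha' = \beta'^{q+1}$, and the unique place of $\mathcal{C}_n$ above it is $Q_{\alpha'\beta'}$. By total ramification, $g(Q_{\alpha\beta}) = Q_{\alpha'\beta'}$. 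Consequently $g(Q_{\alpha\beta}) = Q_{\alpha\beta}$ if and only if $Q_{\alpha'\beta'} = Q_{\alpha\beta}$, which happens if and only if $(\alpha',\beta') = (\alpha,\beta)$, that is, if and only if $\pi(g)(P_{\alpha\beta}) = P_{\alpha\beta}$.

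This gives a bijection between $\{Q_{\alpha\beta}\,:\,g(Q_{\alpha\beta}) = Q_{\alpha\beta}\}$ and $\{P_{\alpha\beta}\,:\,\pi(g)(P_{\alpha\beta}) = P_{\alpha\beta}\}$, proving the equality of their cardinalities. There is no serious obstacle here: the entire argument is the observation that total ramification makes the correspondence $P_{\alpha\beta}\leftrightarrow Q_{\alpha\beta}$ equivariant for the $G$-action via $\pi$. The only point requiring attention is to verify that automorphisms in $G$ really do permute the finite set $\{P_{\alpha\beta}\}$, which follows from $G \le B(Q_\infty)$ fixing the place $(t=0)$ of $\fqn(t)$ setwise.
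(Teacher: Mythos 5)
Your proposal is correct and is exactly the argument the paper intends: the paper proves this lemma with the single remark that each $Q_{\alpha\beta}\mid P_{\alpha\beta}$ is totally ramified in $\mathcal C_n/\fqn(x,y)$, so that the correspondence $P_{\alpha\beta}\leftrightarrow Q_{\alpha\beta}$ is a $G$-equivariant bijection (via $\pi$), which is precisely what you spell out. Your additional verification that elements of $B(Q_\infty)$ permute the sets $\{P_{\alpha\beta}\}$ and $\{Q_{\alpha\beta}\}$ (since they fix $\fqn(t)$ and hence the fibre over $(t=0)$) is a detail the paper leaves implicit, and it is handled correctly.
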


As a consequence of  Lemma~\ref{lem:orders} and \cite[Lemma 4.2]{GSX} we obtain the following description of $N(g)$ in terms of the order ${\rm ord}(\pi(g))$ of $\pi(g)$ in $A(P_\infty)$.
\begin{theorem} \label{thm:Ng}
Let $g\in B(Q_\infty)$. We have
$$N(g)=\left\{\begin{array}{ll}
0 & {\rm if}\ p\, |\, {\rm ord}(\pi(g)),\\
q^3 & {\rm if}\ \pi(g)={\rm id},\\
q & {\rm if}\ {\rm ord}(\pi(g))\, |\, (q+1),\\
1 & {\rm otherwise.}
\end{array}\right.
$$
\end{theorem}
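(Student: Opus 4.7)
The plan is to combine Lemma~\ref{lem:orders} with a direct analysis of fixed places $P_{\alpha\beta}$ of the Hermitian function field under $\sigma := \pi(g) = [a,b,c] \in A(P_\infty)$, thereby recovering the count of \cite[Lemma 4.2]{GSX}. By Lemma~\ref{lem:orders} it suffices to count $P_{\alpha\beta}$ with $\sigma(P_{\alpha\beta}) = P_{\alpha\beta}$. Since such a place is uniquely determined by the values of $x$ and $y$ at it, this happens precisely when $\beta = a\beta + b$ and $\alpha = a^{q+1}\alpha + ab^q\beta + c$, with $(\alpha,\beta) \in \fqt^2$ satisfying $\alpha^q + \alpha = \beta^{q+1}$.

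The cases $\pi(g) = {\rm id}$ and $p \mid {\rm ord}(\pi(g))$ are both quick. If $\pi(g) = {\rm id}$, then all $q^3$ pairs $(\alpha,\beta)$ work. If $p \mid {\rm ord}(\sigma)$, then some nontrivial power $\sigma^k$ lies in the Sylow $p$-subgroup $U = \{[1,b',c'] \mid c'^q + c' = b'^{q+1}\}$ with $(b',c') \neq (0,0)$; substituting $a = 1$ in the fixed-point equations immediately forces $b' = c' = 0$, a contradiction. Hence $\sigma^k$, and therefore $\sigma$ itself, fixes no $P_{\alpha\beta}$.

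For the remaining two cases ${\rm ord}(\sigma)$ is coprime to $p$ and $\sigma \neq {\rm id}$. Here I would use the semidirect product decomposition $A(P_\infty) = U \rtimes T$, where $T = \{[a,0,0] \mid a \in \fqt^*\}$ is the cyclic torus of order $q^2-1$. Since $U$ is the unique Sylow $p$-subgroup, Schur--Zassenhaus shows that any element of order coprime to $p$ is conjugate into $T$. Because conjugation permutes the places of $\mathcal H$, the fixed-point count is preserved, so we may assume $\sigma = [a,0,0]$ with $a \neq 1$. The fixed-point equations then reduce to $\beta(a-1) = 0$ and $\alpha(a^{q+1} - 1) = 0$. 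If ${\rm ord}(\sigma) \mid q+1$, equivalently $a^{q+1} = 1$, then $\beta = 0$ and the Artin--Schreier equation $\alpha^q + \alpha = 0$ yields exactly $q$ solutions. Otherwise $a^{q+1} \neq 1$ forces $\alpha = \beta = 0$, and the unique fixed place is $P_{00}$.

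The only step requiring real care is the conjugacy-to-torus reduction in the semisimple case. This can either be cited from \cite[Lemma 4.2]{GSX}, where the same fixed-point count is carried out, or be verified directly from the group law on $A(P_\infty)$ by conjugating $[a,b,c]$ by a suitable $[1,b_0,c_0]$ to kill the lower-order parts, which is a short but mildly tedious computation.
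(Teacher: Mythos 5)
Your proposal is correct and follows essentially the same route as the paper: the paper likewise reduces via Lemma~\ref{lem:orders} to counting fixed places $P_{\alpha\beta}$ of $\pi(g)$ on the Hermitian function field, and then simply cites \cite[Lemma 4.2]{GSX} for that count, which is exactly the computation you carry out (identity and $p$-power cases directly, semisimple case by conjugating into the torus $\{[a,0,0]\}$ and solving $\beta=a\beta$, $\alpha=a^{q+1}\alpha$). The only minor caution is that your parenthetical alternative of conjugating by an element $[1,b_0,c_0]$ of the Sylow $p$-subgroup alone does not obviously work when $a^{q+1}=1$, but your primary Schur--Zassenhaus argument (or the citation) covers that case.
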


In  \cite[Lemma 4.3]{GSX} the number of elements in a subgroup $H$ of $A(P_\infty)$ of various orders coprime to $p$ have been calculated. Since $\pi|_G:G\to A(P_\infty)$ is a group homomorphism with kernel of size ${\rm gcd}(g_0,m)$ (see Remark~\ref{rem:kernel}), we obtain the following:

\begin{lemma} \label{lem:noidea} Let $G$ be a subgroup of $B(Q_\infty)$, and  $a\in G_1$ an element of order of order $s>1$.
The number of elements $g\in G$ of the form $[a,\star,\star,\star]$ such that $\pi(g)$ has order $s$ if given by
\begin{itemize}
\item ${\rm gcd}(g_0,m)g_w$ if $s \nmid (q+1)$,
\item ${\rm gcd}(g_0,m)\#G_2$  if $s\mid (q+1)$.
\end{itemize}
\end{lemma}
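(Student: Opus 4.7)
The plan is to transfer the counting problem from $G$ to the quotient $\pi(G)\le A(P_\infty)$ via the surjective group homomorphism $\pi|_G$, apply \cite[Lemma 4.3]{GSX} inside $A(P_\infty)$, and then pull the count back to $G$ using the size of the kernel of $\pi|_G$.

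First I would observe that the condition ``$\pi(g)$ has order $s$'' only depends on $\pi(g)$, so the set to be counted is exactly the preimage under $\pi|_G$ of
$$S_a:=\{h\in \pi(G)\mid h=[a,\star,\star],\ {\rm ord}(h)=s\}\subseteq A(P_\infty).$$
Since $\pi|_G:G\to \pi(G)$ is a surjective group homomorphism, every nonempty fiber has the same cardinality, namely $\#\ker\pi|_G$. By Remark \ref{rem:kernel} we have $\#\ker\pi|_G={\rm gcd}(g_0,m)$. Hence the number of elements to be counted equals ${\rm gcd}(g_0,m)\cdot \#S_a$, provided I can show that every element of $S_a$ is in the image of $\pi|_G$; but this is automatic since $S_a\subseteq \pi(G)$.

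Next I would identify $\#S_a$ using \cite[Lemma 4.3]{GSX} applied to the subgroup $H=\pi(G)\le A(P_\infty)$. By the definitions in Section 2 (and the discussion preceding Lemma \ref{lem:cardGpiG}) the triple $(H_1,H_2,H_3)$ associated to $H$ coincides with $(G_1,G_2,G_3)$, and in particular $\#U_H=\#G_2\cdot\#G_3=g_w$. The cited lemma then gives $\#S_a=g_w$ when $s\nmid (q+1)$ and $\#S_a=\#G_2$ when $s\mid (q+1)$. Substituting these values into ${\rm gcd}(g_0,m)\cdot\#S_a$ produces the two cases of the claimed formula.

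The substantive content lies entirely in \cite[Lemma 4.3]{GSX}, which in turn rests on an explicit calculation of the $s$th power of $[a,b,c]$ using the group law of $A(P_\infty)$ and a case distinction on whether $a^{q+1}=1$. The only new point needed here is the translation between $G$ and $\pi(G)$, which as argued above is nothing more than multiplication by the fiber size ${\rm gcd}(g_0,m)$; so I expect no real obstacle beyond carefully matching the notation of \cite{GSX} to the notation of the present paper.
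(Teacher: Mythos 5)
Your proposal is correct and matches the paper's argument: the paper likewise obtains the count by applying \cite[Lemma 4.3]{GSX} to $\pi(G)$ and multiplying by the fiber size $\#\ker\pi|_G={\rm gcd}(g_0,m)$ from Remark \ref{rem:kernel}. In fact you spell out the fiber-counting step more explicitly than the paper, which states the lemma as an immediate consequence.
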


Now we can calculate the expressions appearing in Equation~\eqref{eq:different} for the degree of the different in the extension $\mathcal C_n/\mathcal C_n^G$.
\begin{proposition}
Let $\delta_1={\rm gcd}(g_0,m)$, $\delta_2={\rm gcd}(g_0,q^n+1)$ and $\tau=z^{q^n-3}/x$. Then we have the following equalities.
\begin{enumerate}
\item $\displaystyle \sum_{{\rm id}\neq g \in G} v_{Q_{\infty}}(g(\tau)-\tau)=(m+g_0)g_w+(q^n+1-m)\#G_3-(q^n+2)$
\item $\displaystyle \sum_{{\rm id}\neq g \in G} N(g)=q(\delta_2-\delta_1)\#G_2+(g_0-\delta_2)g_w+q^3(\delta_1-1)$
\end{enumerate}
\end{proposition}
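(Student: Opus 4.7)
The plan is to handle the two sums separately, relying on the explicit description of elements of $B(Q_\infty)$ as quadruples $[a,b,c,d]$ and the tools already set up. For (1), I would rewrite $g(\tau)-\tau$ as a single rational expression using the action of $g$. With $\tau = z^{q^{n-3}}/x$ a uniformizer at $Q_\infty$, one gets
$$g(\tau) - \tau = \frac{z^{q^{n-3}}\bigl((d^{q^{n-3}} - a^{q+1})\,x - ab^q y - c\bigr)}{x\,(a^{q+1}x + ab^q y + c)}.$$
The valuations $v_{Q_\infty}(x) = -(q^n+1)$, $v_{Q_\infty}(y) = -mq$, $v_{Q_\infty}(z) = -q^3$ follow from total ramification of $Q_\infty$ over $P_\infty$ in $\mathcal{C}_n/\mathcal H$ with index $m$, combined with the known valuations at $P_\infty$. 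Generically $v_{Q_\infty}(g(\tau)-\tau)=1$, and the exceptional values arise only from degenerations in the numerator.

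The next step is to locate these degeneracies. The term $(d^{q^{n-3}} - a^{q+1})x$ dominates whenever $d^{q^{n-3}} \neq a^{q+1}$. Using $a^{q+1} = d^{m(q+1)} = d^{q^n+1}$, the equality $d^{q^{n-3}} = a^{q+1}$ becomes $d^{q^n+1-q^{n-3}}=1$. Since $d\in\mu$ has order dividing $(q^n+1)(q-1)$, a short gcd computation based on $q^n+1-q^{n-3} = q^{n-3}(q^3-1)+1$ shows $\gcd((q^n+1)(q-1),\,q^{n-3}(q^3-1)+1)=1$, forcing $d=1$ and hence $a=1$. So $v_{Q_\infty}(g(\tau)-\tau)>1$ only when $g=[1,b,c,1]$. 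In that situation the numerator reduces to $-b^q y - c$, whose valuation is $-mq$ if $b\neq 0$ and $0$ if $b=0,c\neq 0$. Using the identity $mq = q^n+1-m$, this gives $v_{Q_\infty}(g(\tau)-\tau) = m+1$ when $b\neq 0$ and $= q^n+2$ when $b=0,c\neq 0$.

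By Theorem~\ref{thm:GGKreducetoH}, $G=\pi^{-1}(\pi(G))\cap\pi_d^{-1}(G_0)$, so the elements of $G$ of the form $[1,b,c,1]$ are in bijection with $U_{\pi(G)}$, giving $g_w$ of them, and the subset with $b=0$ is in bijection with $G_3$. Thus the three classes have cardinalities $\#G_3-1$, $g_w-\#G_3$, and $g_0g_w - g_w$, with respective valuations $q^n+2$, $m+1$, and $1$; summing and collecting terms produces the formula in (1). For (2), I would partition non-identity $g\in G$ via Theorem~\ref{thm:Ng} into four classes: (a) $\pi(g)=\mathrm{id}$, contributing $q^3$; (b) $\mathrm{ord}(\pi(g))>1$ and divides $q+1$, contributing $q$; (c) $\mathrm{ord}(\pi(g))$ coprime to $p$ and not dividing $q+1$, contributing $1$; (d) $p\mid \mathrm{ord}(\pi(g))$, contributing $0$. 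The kernel of $\pi|_G$ has size $\delta_1$ by Remark~\ref{rem:kernel}, yielding $q^3(\delta_1-1)$ from (a). For each $a\in G_1$ of order $s>1$, Lemma~\ref{lem:noidea} produces $\delta_1\#G_2$ lifts with $\mathrm{ord}(\pi(g))=s$ when $s\mid q+1$ and $\delta_1 g_w$ otherwise; since $G_1$ is cyclic of order $g_1=g_0/\delta_1$, the number of $a\in G_1\setminus\{1\}$ of order dividing $q+1$ equals $\gcd(g_1,q+1)-1$, and the remaining $g_1-\gcd(g_1,q+1)$ elements fall into (c).

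Adding the contributions and matching with the target formula reduces to the arithmetic identity $\delta_1\cdot\gcd(g_1,q+1)=\delta_2$, which I would verify by $\ell$-adic valuation at each prime $\ell\neq p$ using $g_0=\delta_1 g_1$ and $m(q+1)=q^n+1$. The main obstacle is the valuation analysis in (1): correctly identifying the two exceptional values $m+1$ and $q^n+2$ and ruling out further degenerations via the gcd argument that $d^{q^n+1-q^{n-3}}=1$ in $\mu$ implies $d=1$. Once this is in place, both sums become a matter of careful counting using Theorem~\ref{thm:GGKreducetoH}, Remark~\ref{rem:kernel}, and Lemma~\ref{lem:noidea}.
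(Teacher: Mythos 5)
Your proposal is correct and follows essentially the same route as the paper: the same three-way split of $G\setminus\{\mathrm{id}\}$ according to the value of $v_{Q_\infty}(g(\tau)-\tau)$ with the counts $\#G_3-1$, $g_w-\#G_3$ and $g_0g_w-g_w$ for part (1), and the same combination of Theorem~\ref{thm:Ng}, Remark~\ref{rem:kernel} and Lemma~\ref{lem:noidea} together with the identity $\gcd(g_1,q+1)=\delta_2/\delta_1$ for part (2). The only difference is that the paper simply quotes the valuation table for $v_{Q_\infty}(g(\tau)-\tau)$ from \cite{DM}, whereas you rederive it; your derivation (the valuations $v_{Q_\infty}(x)=-(q^n+1)$, $v_{Q_\infty}(y)=-mq$, $v_{Q_\infty}(z)=-q^3$, the identity $mq=q^n+1-m$, and the observation that $\gcd\bigl((q^n+1)(q-1),\,q^n+1-q^{n-3}\bigr)=1$ forces $d=1$ and hence $a=1$ in the degenerate case) checks out.
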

\begin{proof}
\begin{enumerate}
\item As was shown in $\cite{DM}$, for an element $g=[a,b,c,d]$, which is different than ${\rm id}$, we have
$$v_{Q_{\infty}}(g(\tau)-\tau)=\left\{\begin{array}{ll}
m+1 & {\rm if}\ d=1, b\neq 0,\\
q^n+2 & {\rm if}\ d=1, b=0,\\
1 & {\rm otherwise.}
\end{array}\right.$$
 The number of elements in $G\backslash\{{\rm id}\}$ with $d=1$ and $b\neq 0$ (respectively $b=0$) is given by $g_w-\#G_3$ (respectively $\#G_3-1$).  This leaves $\#G-g_w=g_0g_w-g_w$ elements for the third case.
\item Since $G_1$ is a cyclic group of order $g_1$, there are exactly ${\rm gcd}(g_1,q+1)-1$ elements in $G_1\backslash\{1\}$ of order dividing $q+1$. The remaining $(g_1-{\rm gcd}(g_1,q+1))$ elements in $G_1\backslash\{1\}$ have order not dividing $q+1$. The number of elements $g$ in $G\backslash\{{\rm id}\}$ such that $\pi(g)=[1,0,0]$ equals $\#\ker\pi|_G-1=\delta_1-1$. Then by using the facts that $g_1=g_0/\delta_1$ and ${\rm gcd}(g_1,q+1)=\delta_2/\delta_1$, we obtain the desired result from Theorem~\ref{thm:Ng} and Lemma~\ref{lem:noidea}.
\end{enumerate}
\end{proof}

Using the Riemann--Hurwitz genus formula for the extension $\mathcal C_n/\mathcal C_n^G$, we obtain the following expression for the genus of $\mathcal C_n^G$.
\begin{theorem}\label{thm:genus}
For $G \leq B(Q_\infty)$, let $\delta_1={\rm gcd}(g_0,m)$ and $\delta_2={\rm gcd}(g_0,q^n+1)$. Then we have
$$g(\mathcal C_n^G)=\frac{q^2(q^n+1)-q^3\delta_1-q(\delta_2-\delta_1)\#G_2+(\delta_2-m)g_w-(q^n+1-m)\#G_3}{2\#G}\ .$$
\end{theorem}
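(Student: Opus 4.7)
The plan is to apply the Riemann--Hurwitz genus formula directly to the Galois extension $\mathcal C_n/\mathcal C_n^G$, whose degree equals $\#G=g_0g_w$ by Lemma~\ref{lem:cardGpiG}. Writing the formula as
$$2g(\mathcal C_n)-2=\#G\cdot(2g(\mathcal C_n^G)-2)+\deg\,{\rm Diff}(\mathcal C_n/\mathcal C_n^G),$$
and solving for $g(\mathcal C_n^G)$, all that remains is to substitute the known value $2g(\mathcal C_n)=(q-1)(q^{n+1}+q^n-q^2)=q^{n+2}-q^n-q^3+q^2$ from \cite{GGS}, the cardinality $\#G=g_0g_w$, and the expression for the degree of the different obtained by combining parts (1) and (2) of the previous proposition via Equation~\eqref{eq:different}.

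Concretely, I would add the two expressions from the preceding proposition to obtain
$$\deg\,{\rm Diff}=(m+2g_0-\delta_2)g_w+(q^n+1-m)\#G_3+q(\delta_2-\delta_1)\#G_2+q^3\delta_1-q^3-q^n-2,$$
and subtract this from $2g(\mathcal C_n)-2=q^{n+2}-q^n-q^3+q^2-2$. The constants collapse neatly: the $-q^n$ cancels with the $+q^n$ coming from $-(-q^n)$, the $-2$ cancels with $-(-2)$, and the $-q^3$ combines with $-(-q^3)$ and the $-q^3\delta_1$ from the different, leaving $q^{n+2}+q^2-q^3\delta_1$. Factoring $q^{n+2}+q^2=q^2(q^n+1)$ yields
$$\#G\cdot(2g(\mathcal C_n^G)-2)=q^2(q^n+1)-q^3\delta_1-q(\delta_2-\delta_1)\#G_2-(m+2g_0-\delta_2)g_w-(q^n+1-m)\#G_3.$$

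Finally, to convert $\#G\cdot(2g(\mathcal C_n^G)-2)$ into $2\#G\cdot g(\mathcal C_n^G)$ one adds $2\#G=2g_0g_w$, which merges with the term $-(m+2g_0-\delta_2)g_w$ to produce $(\delta_2-m)g_w$. This is the single nonobvious simplification in the computation, and it is precisely the reason the factor $g_0$ disappears from the final formula. Dividing by $2\#G$ yields the claimed expression for $g(\mathcal C_n^G)$.

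There is no real conceptual obstacle; all the work is contained in the previous proposition (where the ramification data at $Q_\infty$ and at the tamely ramified places $Q_{\alpha\beta}$ is assembled via Theorem~\ref{thm:Ng} and Lemma~\ref{lem:noidea}) and in the fact that Equation~\eqref{eq:different} is valid in both even and odd characteristic. The only point that requires care is verifying by hand that the constant terms and the $g_w$-coefficients collapse as described, which is a routine but slightly lengthy piece of bookkeeping.
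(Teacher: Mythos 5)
Your proposal is correct and is exactly the argument the paper intends: the paper gives no separate proof of Theorem~\ref{thm:genus} beyond the sentence ``Using the Riemann--Hurwitz genus formula for the extension $\mathcal C_n/\mathcal C_n^G$, we obtain\dots'', so the content is precisely the substitution of $2g(\mathcal C_n)-2=q^{n+2}-q^n-q^3+q^2-2$, $\#G=g_0g_w$, and the sum of the two parts of the preceding proposition into Riemann--Hurwitz. Your bookkeeping checks out, including the key cancellation $2g_0g_w-(m+2g_0-\delta_2)g_w=(\delta_2-m)g_w$ that removes $g_0$ from the final formula.
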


\begin{remark}
For $n=1$ we have $m=\delta_1=1$, $g_0=g_1$ and $\delta_2=\gcd(g_1,q+1)$, and we obtain
$$g(\mathcal H^G)=\frac{q-\#G_3}{2\#G}\left(q-(\delta_2-1)\#G_2\right)\ ,$$
as in \cite[Thm 4.4]{GSX}.
\end{remark}

Combining possible values for $g_1,\#G_2,\#G_3$ as given in \cite{bassa} with Theorem \ref{thm:genus} we can obtain many genera of maximal function fields. 
For several of these, no maximal function field of this genus was previously known to the best of our knowledge. We compared our results with the genera of maximal function fields given in \cite{AQ,bassa,DO,DO2,FG,GSX,auto1}. Some of the new genera for small values of $n$ and $q$ are as follows.
$$
{\renewcommand{\arraystretch}{1.5}
\begin{array}{l|l}
\mathbb{F}_{q^{2n}} & \makebox{new genera}\\
\hline
\mathbb{F}_{2^{12}} & 18 \\
\mathbb{F}_{2^{18}} & 37,45,82,99,189,207,244,406,840,1708\\
\mathbb{F}_{2^{20}} & 52,502,2552\\
\mathbb{F}_{3^{12}} & 16400, 17437, 52456\\
\mathbb{F}_{3^{14}} & 1365,2731,4369,8739\\
\mathbb{F}_{3^{18}} & 49,330,2065, 27280, 39388, 47775,54532, 54588, 78736, 78816, 95466,95550, \\
&109092,  118201, 157512, 190932, 236281, 236523, 354640, 472683, 708916, 709644, \\
& 1062856, 1418196, 1534612, 1860033, 2125740, 3069264, 3720066, 4605241,\\
& 9210603, 13817128, 27634620 \\
\mathbb{F}_{5^{6}} & 99, 285\\
\mathbb{F}_{5^{10}} & 24186, 37450, 64492\\
\mathbb{F}_{5^{12}} & 124804, 1874462, 2539056, 3124904, 7617168, 9374712\\
\end{array}
}$$

\section*{Acknowledgments}

Nurdag\"{u}l Anbar and Peter Beelen gratefully acknowledge the support from The Danish Council for Independent Research (Grant No.~DFF--4002-00367). Nurdag\"{u}l Anbar is also supported by H.C.~\O rsted COFUND Post-doc Fellowship from the project ``Algebraic curves with many rational points".  Alp Bassa is supported by the BAGEP Award of the Science Academy with funding supplied by Mehve{\c s} Demiren in memory of Selim Demiren.

\noindent
Nurdag\"ul Anbar\\
Technical University of Denmark,
Department of Applied Mathematics and Computer Science,
Matematiktorvet 303B, 2800 Kgs. Lyngby,
Denmark,
nurdagulanbar2@gmail.com

\vspace{1ex}
\noindent
Alp Bassa\\
Bo\u{g}azi\c{c}i University,
Faculty of Arts and Sciences,
Department of Mathematics,
34342 Bebek, \.{I}stanbul,
Turkey,
alp.bassa@boun.edu.tr

\vspace{1ex}
\noindent
Peter Beelen\\
Technical University of Denmark,
Department of Applied Mathematics and Computer Science,
Matematiktorvet 303B, 2800 Kgs. Lyngby,
Denmark,
pabe@dtu.dk

\end{document}